%June 1 2015

\documentclass[12pt]{amsart}

\usepackage{amssymb,amsmath,amsthm,epsfig,amscd,amssymb,latexsym,graphicx, epic, eepic}
\usepackage{hyperref}

\UseRawInputEncoding
\newtheorem{theorem}{Theorem}[section]
\newtheorem{lemma}[theorem]{Lemma}
\newtheorem{prop}[theorem]{Proposition}

\newtheorem{defn}{Definition}[section]

\setlength{\oddsidemargin}{-2mm}
\setlength{\evensidemargin}{-2mm}
\setlength{\textwidth}{160mm}
\setlength{\topmargin}{-2mm}
\setlength{\textheight}{230mm}

\def \mR{\mathbb{R}}

\def\R3{\mathbb{R}^3}

\def\Div{{\rm Div}\,}

\def \Ss{\mathbb {S}}

\def \E{\mathcal{E}}

\def \O{\mathcal{O}}

\def \ba {\begin {eqnarray*} }
\def \ea {\end {eqnarray*} }
\def \beq {\begin {eqnarray}}
\def \eeq {\end {eqnarray}}
%\def \supp {\hbox{supp}\,}

          % Formatting for the absolute value
               % Formatting for the absolute value
         % Formatting for the norm
\newcommand{\supp}{\mathrm{supp}}

\renewcommand{\phi}{\varphi}

\theoremstyle{definition}
\newtheorem{thm}{Theorem}[section]

\numberwithin{equation}{section}

\newcounter{sidenote}

\begin{document}
\centerline{\Large Unique Determination of a Planar Screen in Electromagentic}
\smallskip
\centerline{\Large Inverse Scattering}
\bigskip
\bigskip

\date
\maketitle

\centerline{\large Petri Ola\footnote{Department of Mathematics and Statistics, 00014 University of Helsinki}, Lassi P\"aiv\"arinta\footnote{Department of Mathematics, Tallinn University of Technology, Estonia} and Sadia Sadique\footnote{Department of Mathematics, Tallinn University of Technology, Estonia}\footnote{Reserach of P.O was supporeted by the Academy of Finland under CoE grant ..., Reasearch of L.P and S.S was supported by ERC grant ...}}
\bigskip

{\bf Abstract.} We show that the far--field pattern of a scattered electromagnetic field corresponding to a single incoming plane--wave uniquely determines a bounded supraconductive planar screen. This generalises a previous scalar result of Bl\aa sten, P\"aiv\"arinta and Sadique.
\medskip

{\bf AMS classification: 35R30, 35Q61, 78A46}.

%{\bf Keywords:} Inverse conductivity problem, electrical impedance
%tomography, unknown boundary, Teichm\"uller mapping.
\bigskip

\section*{Introduction}

The study of wave scattering from thin and large objects is rooted in the field of antenna theory. This interest began with a competition initiated by the Prussian Academy in 1879 to demonstrate the existence or non-existence of electromagnetic waves. James Clerk Maxwell's \cite{MJC} predicted the existence of such  waves by his mathematical theory 15 years earlier. In 1882, the competition was won by Heinrich Hertz, who used a dipole antenna to radiate and measure EM waves, thereby confirming Maxwell's theory. This experiment, along with Maxwell's theory, has had a profound impact on modern society. 

%Hertz's antenna was made up of two identical planar bodies, in his case squares, which produced radiating EM waves. As radiating antennas are identical to receiving antennas. The theory of antennas is closely linked to EM scattering and inverse scattering theory.

\medskip

Recently, there has been a growing interest in solving the inverse scattering problem with reduced measurements. Inverse scattering problems involves determining the properties of an object, such as its shape, size, and composition, from the scattered field produced when it is illuminated with electromagnetic waves. 
One way to determine the shape of an object is by analysing the pattern of waves that are scattered by the object in the far field. This is a problem that involves both mathematics and numerical techniques, and it has many practical applications. An overview of this topic can be found in the book \cite{CR} by Colton and Kress. In \cite{BPS} we studied the problem of fixed frequency acoustic scattering from a sound soft flat screen. In the above paper, our main result was that, given the plane where the screen is located, the far field produced by single incident waves determines the exact shape of the screen, given that it is not anti-symmetric with respect to the
plane. Our current work is the generalization of the result of 
\cite{BPS} to Maxwell's equations. 

%Sylvester and Uhlmann in \cite{SJG} introduced the method of using complex geometrical optics solutions and infinitely many far-field measurements in the fixed frequency setting. This was the first technique that allowed for the unique determination of an arbitrary smooth-enough scattering potential through far-field measurements. This method has since become a popular area of research, and the field has expanded rapidly.

\medskip

The shape determination problem in literature \cite{CR} is known as Schiffer'ss problems. The first uniqueness result for the case of the Dirichlet problem was presented by Schiffer in 1967 \cite{LDS}. The uniqueness theorem for the inverse Dirichlet problem, given by Schiffer \cite{LDS} requires information about the far-field of an infinity of incident frequencies. Due to Schiffer's uniqueness result for sound soft general obstacles by countably many incident plane waves(\cite{CR, LDS}) there has been an extensive study in this direction, see e.g.,
the result of uniqueness in the general domain in \cite{KR, DG,HJ, RA1,RA2,RL,SB}, for polyhedral scatterers \cite{JM, EM1, EM2, AGR}, for ball or disc \cite{LC,LZ} and for smooth planer curves \cite{KR1,KR2,ML,RL2}.  
In \cite{HLZ} the work has done on inverse electromagnetic scattering problem in the TE polarization case. They show that the knowledge of the electric far-field pattern for a single incoming wave is sufficient to uniquely determine the shape of rectangular penetrable scatterer.
See also Liu and Zou \cite{HJ}, who review some of the uniqueness results in inverse acoustic and electromagnetic obstacle scattering problems. There emphasis is given to the recent progress on the unique determination of general polyhedral scatterers by the far field data corresponding to one or several incident fields. For recent results in the time--harmonic inverse EM-scattering see the short review by Rainer Kress \cite{KR}. 
   
\bigskip
A  particular case in \cite{AH} gives the unique determination of a flat screen by a
single incident plane-wave measurement under some additional assumptions on the wave. To our knowledge
there is no proof for the unique determination of a planar screen
shape by one far-field pattern without restrictive a priori
assumptions. The results in \cite{NGM} comes very close: the obstacle
can be any Lipschitz domain as long as its boundary is not an analytic
manifold. But they does not considered screens , which is our priority.

\medskip
The goal of this work is to find unique determination of supporting hyperplane corresponding to the single measurement having non-vanishing far-fields. The proof follows from standard representation formula for exterior solution of Maxwell's equationss. The method we are using are based on the ideas of certain integral operator \cite{LS,PR}.  Similar to \cite{BPS} the crucial part of our argument is that the shape of the screen is exactly the support of the jump of the tangential component of the scattered magnetic field. Our main result is the following:

\begin{thm}\label{mainthm}
Let \(S\) be a \(C^2\)--screen contained in a supporting hyperplane \(L\) and let 
\[
E(\theta; p,q) = \mu^{1/2} (p\times \theta)e^{ik\langle \theta, x\rangle}, \, H(\theta; p,q) = \varepsilon^{1/2} (q\times \theta) e^{ik\langle \theta, x\rangle}
\]
the EM-plane wave with wavenumber \(k\), propagation direction \(\theta\) and polarizations \(p\) and \(q\). Let \((e_{\rm sc}, h_{\rm sc})\) be the electromagentic wave scattered by \(S\). Then the far--field pattern of \((e_{\rm sc}, h_{\rm sc})\) uniquely determines both the supporting hyperplane \(L\) and the screen \(S\) if neither \(p\) or \(q\) is parallel to \(\theta\).
\end{thm}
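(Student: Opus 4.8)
The plan is to prove unique determination in two stages, mirroring the scalar strategy of \cite{BPS} but adapted to the vectorial Maxwell setting. First I would establish the unique determination of the supporting hyperplane $L$, and then, given $L$, recover the screen $S$ itself as the support of a suitable jump. The conceptual backbone is that the scattered field $(e_{\rm sc},h_{\rm sc})$ extends, away from $\overline S$, to a real-analytic solution of Maxwell's equations in the exterior of $\overline S$ (indeed real-analytic on $\mR^3\setminus\overline S$), so that its singular support is contained in $\overline S\subset L$. The far-field pattern, via Rellich's lemma and the analyticity of solutions to the exterior problem, determines $(e_{\rm sc},h_{\rm sc})$ uniquely in the unbounded connected component of the complement of $\overline S$, hence throughout $\mR^3\setminus L$. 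Thus two screens producing the same far field give rise to scattered fields agreeing on $\mR^3\setminus(L\cup L')$, and the task reduces to a geometric-analytic argument about where these fields can fail to be analytic.

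For the first stage, suppose two admissible screens $S\subset L$ and $S'\subset L'$ produce the same far-field pattern. By the Rellich/unique-continuation argument above, the corresponding scattered fields coincide on the connected exterior region $\mR^3\setminus(L\cup L')$. If $L\neq L'$, the two planes intersect in at most a line, and I would argue that the common scattered field is then real-analytic across all of $L$ except possibly on $L\cap L'$: points of $L\setminus L'$ lie in the complement of $\overline{S'}$, where the field (being the $S'$-scattered field) is analytic, and symmetrically for $L'\setminus L$. Combined with the jump relations, this forces the tangential jump of the field across $L$ to vanish on $S\setminus L'$, i.e. almost everywhere on $S$, contradicting the assumption that the far field is non-vanishing (which by the representation formula is equivalent to a nontrivial jump). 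Hence $L=L'$. The role of the hypothesis that neither $p$ nor $q$ is parallel to $\theta$ enters here: it guarantees the incident wave is genuinely nontrivial as an electromagnetic field, so the induced surface current on $S$ does not vanish identically and the far field is nonzero.

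For the second stage, with $L=L'$ fixed, I would use the central observation highlighted in the introduction: the screen is exactly the support of the jump of the tangential component of the scattered magnetic field across $L$. Concretely, writing $[\nu\times h_{\rm sc}]$ for the tangential jump across $L$, the standard representation formula for exterior Maxwell solutions (the electric and magnetic single/double layer potentials, cf.\ \cite{CR,LS,PR}) expresses $(e_{\rm sc},h_{\rm sc})$ as a boundary-layer potential whose density is supported on $\overline S$, and whose jump across $L$ recovers precisely the surface current. Since the scattered field is now common to both screens on $\mR^3\setminus L$, the two jumps agree as distributions on $L$, so $\supp[\nu\times h_{\rm sc}]$ is a single well-defined closed subset of $L$. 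It then remains to show this support equals $\overline S$ (and $\overline{S'}$), which follows because the density cannot vanish on an open subset of $S$ without the field being analytic across that subset — again an application of unique continuation together with the jump relations. This identifies $\overline S=\overline{S'}$, and for $C^2$ screens equality of closures gives equality of the screens.

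I expect the main obstacle to be the vectorial jump analysis, that is, controlling precisely how the tangential components of $e_{\rm sc}$ and $h_{\rm sc}$ jump across $L$ and proving the sharp statement that the \emph{support} of the magnetic tangential jump coincides with $\overline S$ rather than merely being contained in it. In the scalar case of \cite{BPS} the jump is that of a normal derivative of a single scalar field, whereas here one must keep track of the full surface current and the coupling between the electric and magnetic potentials through Maxwell's equations, and rule out degenerate cancellations in the vector-valued density. Establishing that a nonvanishing far field forces a nonvanishing jump on a set of positive measure — and conversely that vanishing jump on an open set forces local analyticity — is the technical crux; once this ``support $=$ screen'' principle is in place for the Maxwell system, the hyperplane determination and the final shape recovery follow the scalar template closely.
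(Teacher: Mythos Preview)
Your two--stage outline (determine $L$ first, then recover $S$ as the support of $[\nu\times h_{\rm sc}]$) matches the paper's architecture, and your Stage~1 argument is essentially what the paper does: once the scattered fields agree off $S_1\cup S_2$, the jump densities are forced to live on $S_1\cap S_2$, which for transversal planes is a line; a density in $\dot{TH}^{-1/2}$ supported on a codimension--two set vanishes, contradicting a nonzero far field.

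The genuine gap is in Stage~2. You write that ``the density cannot vanish on an open subset of $S$ without the field being analytic across that subset --- again an application of unique continuation together with the jump relations.'' The first clause is true, but analyticity of $(e_{\rm sc},h_{\rm sc})$ across an open $U\subset S$ is not by itself a contradiction, and unique continuation alone does not produce one. The paper's mechanism is a \emph{reflection/parity argument} that you do not identify: assuming $\rho=[\nu\times h_{\rm sc}]=0$ on $U$, one forms the total field $(E,H)=(e_0+e_{\rm sc},\,h_0+h_{\rm sc})$, extends it from $\{x_3>0\}$ to $\{x_3<0\}$ by making the tangential components of $E$ odd and the normal component even (and the opposite for $H$), checks that the extended pair still solves Maxwell in the cylinder $U\times\mR$ because $\nu\times E|_S=0$ and $[\nu\times H]|_U=0$, and then uses unique continuation to conclude that $(E,H)$ has this reflection parity \emph{globally} in $\mR^3\setminus S$. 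Now one decomposes $e_0=\hat E+\tilde e$: the layer--potential part $\tilde e$ has parity $+1$ (tangential even, normal odd) because it is the curl of a double layer of a tangential density on $\{x_3=0\}$, while $\hat E$ has parity $-1$ by construction. Since $\tilde e$ is radiating and the plane wave is not, the parity--$+1$ part of the explicit plane wave $e_0$ must vanish identically; the analogous statement for $h_0$ forces its parity--$-1$ part to vanish. A direct computation with $e_0=\mu^{1/2}(p\times\theta)e^{ik\langle\theta,x\rangle}$ shows these vanish only when $p\times\theta=0$ or $q\times\theta=0$, which is exactly what the polarization hypothesis excludes.

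This also corrects your reading of where the hypothesis enters. It is not merely that ``$p\not\parallel\theta$ makes the incident wave nontrivial, hence the surface current is nontrivial.'' A nontrivial incident wave can still give zero scattering (e.g.\ if $p\times\theta$ happens to be normal to $L$ and $\theta$ lies in $L$, then $\nu\times e_0\equiv 0$ on $L$). The hypothesis is used at the very end of the parity computation to rule out the only polarizations for which the incident plane wave could have the reflection symmetry forced by the argument. Without supplying this reflection step, your Stage~2 does not close.
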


\medskip
The plan of this paper is as follows: Analysis of mathematical proof and concept for direct scattering problem of EM waves is discussed in Section one. We start by giving a precise definition of a planar screen and discuss time harmonic Maxwell's equations in the exterior of screen.
Representation theorem for the fundamental solution of  vector Helmholtz equation is also analysed here.
In Section 2 solution of the inverse problem is presented, including also the unique determination of the supporting hyperplane.\\
The paper is concluded with additional comments and future remarks.

\section{Scattering From a Perfectly Conducting Screen}

\subsection{Formal Definitions} 

\begin{defn}
A {\em planar \(C^k\)--screen}, \(k=1, \ldots, \infty\), in \(\mR^3\) is a compact, connected \(C^k\)--submanifold of an affine hyperplane \(L\subset \mR^3\). The affine hyperplane \(L\) is called {\em the supporting hyperplane} of \(S\). In the sequel we also fix a globally defined unit normal vector field on \(S\) and denote it by \(\nu\). Also, the boundary of \(S\) as a submanifold of \(L\) is denoted by \(\partial S\).
\end{defn}

\smallskip

Consider time--harmonic Maxwell's equations in the exterior of a screen \(S\):
\begin{equation}\label{Maxwell}
\nabla\times E  = i\omega\mu H , \quad \nabla\times H  = -i\omega\varepsilon E  \quad \mathrm{ in\, \,  } \mR^3 \setminus S.
\end{equation}
Here the {\em magnetic permeablity}� \(\mu\) and the {\em dielectricity}  \(\varepsilon\) are known positive constants, and we assume also that the angular frequency \(\omega>0\) is known.
\medskip

Given an incident filed \((E_0, H_0)\), i.e. a solution of 
\[
\nabla\times E_0  = i\omega\mu H_0 , \quad \nabla\times H_0  = -i\omega\varepsilon E_0  \quad \mathrm{ in\, \,  } \mR^3,
\]
the corresponding {\em scattered field} \((E_{sc}, H_{sc})\) is (formally) defined by demanding that \((E,H)\), where \(E = E_0 + E_{sc}\) and  \(H = H_0 + H_{sc}\), satisfy (\ref{Maxwell}), and the scattered field is outgoing in the sense that it satisfies the Silver--M\"uller--radiation conditions,
\begin{equation}\label{rad}
\hat r\times E_{sc} + \sqrt{\frac{\varepsilon}{\mu}} H_{sc} = o(|x|^{-1}), \quad \hat r\times H_{sc} - \sqrt{\frac{\mu}{\varepsilon}} E_{sc} = o(|x|^{-1}), \quad \text{as \(|x|\to \infty\)}.
\end{equation}
Here \(\hat r = x/|x|\). If we further assume that the screen is perfectly conducting, i.e the total field vanishes on \(S\), this leads to the {\em direct scattering problem for the perfectly conducting screen} \(S\): for a given incident field \((E_0, H_0)\) show that there is a unique scattered field \((E_{sc}, H_{sc})\) s.t. 
\begin{equation}\label{Maxwell-sc}
\nabla\times E_{sc}  = i\omega\mu H_{sc} , \quad \nabla\times H_{sc}  = -i\omega\varepsilon E_{sc}  \quad \mathrm{ in\, \,  } \mR^3 \setminus S
\end{equation}
satisfying \eqref{rad} and such that
\begin{equation}\label{screen cold}
\nu\times (E_{sc} + E_0) = 0 \quad \text{on \(S\)}.
\end{equation}
Note that we have not specified in what sense the boundary value \eqref{screen cold} holds. This will depend on the spaces where we look for solutions and the availability of suitable trace theorems. 
\medskip

\subsection{Representation Theorems} Assume for now that \(S\subset \mR^3\) is a \(C^2\)--screen. Denote by \(C^k_S\) the closed subspace of \(C^k(\mR^3\setminus S)\) consisting of  those \(u\in C^k(\mR^3\setminus S)\) such that \(u\) and all its derivatives  up to order \(k\) have normal limits on \(S\), i.e for all \(|\alpha|\leq k\) there are limits
\[
\lim_{\delta \to +0} \partial^\alpha_x u(x \pm \delta \nu(x)) = u^\pm_\alpha (x), \quad x\in S,
\]
where \(u_\alpha^\pm \in C(S)\). Note that we do not assume that limits \(u_\alpha^+\) and \(u_\alpha^-\) coincide on \(S\).

\begin{prop}\label{repthm}
Assume \((e,h)\in (C^1_S)^3 \times  (C^1_S)^3\) solves
\[
\nabla\times e  = i\omega\mu \, h , \quad \nabla\times h  = -i\omega\varepsilon \, e \quad \mathrm{ in\, \,  } \mR^3 \setminus S,
\]
and the Silver--M\"uller--radiation condition
\[
\hat r\times e + \sqrt{\frac{\varepsilon}{\mu}} h = o(|x|^{-1}), \quad \hat r\times h - \sqrt{\frac{\mu}{\varepsilon}} e = o(|x|^{-1}), \quad \text{as \(|x|\to \infty\)}, \quad \hat r = x/|x| >0.
\]
Then for all \(x\in \mR^3\), 
\ba
e(x) & = & \nabla \times \int_{S} \Phi(x-y) (\nu \times \{e^+(y) - e^-(y)\})\, ds(y)\\ & - & \frac{1}{i\omega\varepsilon} (\nabla \times)^2 \int_{S} \Phi(x-y) (\nu \times \{h^+ (y) - h^-(y)\})\, ds(y)
\ea
and
\ba{equation}
h(x) & = & \nabla \times \int_{S} \Phi(x-y) (\nu \times \{h^+(y) - h^-(y)\})\, ds(y)\\ & + & \frac{1}{i\omega\mu} (\nabla \times)^2 \int_{S} \Phi(x-y) (\nu \times \{e^+ (y) - e^-(y)\})\, ds(y).
\ea

\end{prop}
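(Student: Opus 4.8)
The plan is to derive this from the Stratton--Chu representation formula (the vector analogue of Green's representation theorem) applied to the exterior region, treating the screen as the common limit of the two faces of a thin tubular neighbourhood. The first ingredient is to record why the scalar Helmholtz fundamental solution $\Phi$ governs the problem. Taking the divergence of the second Maxwell equation gives $\nabla\cdot e = 0$, and likewise $\nabla\cdot h = 0$; then, using $\nabla\times\nabla\times = \nabla(\nabla\cdot)-\Delta$ together with $\nabla\times\nabla\times e = i\omega\mu\,\nabla\times h = \omega^2\varepsilon\mu\, e = k^2 e$ (with $k=\omega\sqrt{\varepsilon\mu}$), each Cartesian component of $e$ and of $h$ solves $(\Delta + k^2)u = 0$ in $\mR^3\setminus S$. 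This is precisely what lets scalar Green's identities against $\Phi$ be assembled into the vectorial boundary potentials appearing on the right-hand side.

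Next, fix $x\notin S$ and choose $R$ large enough that $x$ and $S$ lie inside the ball $B_R$. For $\delta>0$ let $S_\delta=\{y:\operatorname{dist}(y,S)<\delta\}$ be a thin tubular neighbourhood of the screen and set $D_{R,\delta}=B_R\setminus\overline{S_\delta}$, a bounded Lipschitz domain on which the classical Stratton--Chu formula holds, expressing $e(x)$ as boundary integrals over $\partial B_R$ and over $\partial S_\delta$ with kernel $\Phi(x-\cdot)$ and densities $\nu\times e$, $\nu\times h$. As $\delta\to 0$ the boundary $\partial S_\delta$ splits into an upper face converging to $S$ (carrying the traces $e^+, h^+$) and a lower face converging to $S$ (carrying $e^-, h^-$), whose outward normals relative to $D_{R,\delta}$ point in opposite directions; since the kernel and its derivatives are bounded uniformly for $y$ near $S$ with $x$ fixed away from $S$, while the lateral collar near the edge $\partial S$ has area $O(\delta)$ and hence contributes nothing in the limit, the two face integrals combine into the single jump integrals over $S$ with densities $\nu\times\{e^+-e^-\}$ and $\nu\times\{h^+-h^-\}$. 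Here the hypothesis $(e,h)\in(C^1_S)^3\times(C^1_S)^3$ is exactly what guarantees that these normal limits exist and are continuous, so the jump densities are well defined.

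Letting $R\to\infty$, I would then show the $\partial B_R$ contribution vanishes by the standard radiation argument: using $\Phi(x-y)=O(R^{-1})$ and $\nabla_y\Phi(x-y)=ik\,\hat r\,\Phi + O(R^{-2})$ on $|y|=R$, one groups the spherical boundary terms so that the Silver--M\"uller combination $\hat r\times e+\sqrt{\varepsilon/\mu}\,h = o(R^{-1})$ appears; together with the a priori $O(1/R)$ bound on $e,h$ (itself a consequence of the radiation condition) this forces the spherical integral to $0$, leaving precisely the jump potentials and yielding the stated formula for $e$. The formula for $h$ then follows by applying the identical argument to $h$, invoking the symmetry of Maxwell's system under $(e,h,\varepsilon,\mu)\mapsto(h,-e,\mu,\varepsilon)$, which accounts for the sign change and the replacement of $1/(i\omega\varepsilon)$ by $1/(i\omega\mu)$.

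I expect the main obstacle to be the screen geometry: because $\mR^3\setminus S$ is not a Lipschitz domain (the screen has zero thickness and a one-dimensional edge $\partial S$), the Stratton--Chu formula cannot be applied directly and must be obtained through the $S_\delta$ regularisation. The delicate point is the uniform control of the boundary potentials near the edge as $\delta\to 0$, together with the orientation bookkeeping that turns the two faces into a genuine jump and the verification that the collar term is truly negligible; these are where the care is needed, whereas the radiation estimate and the $e$--$h$ symmetry are routine.
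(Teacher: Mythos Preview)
Your proposal is correct and follows essentially the same route as the paper: thicken the screen to a collar neighbourhood $\mathcal O_\delta$, apply the classical Stratton--Chu representation in the exterior of $\overline{\mathcal O_\delta}$, and let $\delta\to 0$ so that the two faces combine into the jump densities. The only cosmetic difference is that the paper invokes the exterior Stratton--Chu formula directly (with the radiation condition already built in via the citation to \cite{ck1}), whereas you unpack that step by truncating with $B_R$ and handling the spherical contribution explicitly; this is the same argument at a finer level of detail.
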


\begin{proof}
For \(\delta >0\) let \(\O _\delta = \{x \pm t\nu (x); \, x\in S, \, 0\leq t <\delta\}\) be a collar neighbourhood of \(S\). For \(\delta\) small enough this is a bounded piecewise analytic domain.
The standard representation formulas (see for example \cite{ck1}) give that for all \(x\in \mR^3\setminus \overline \O_\delta\) we have
\[
e(x) = \nabla \times \int_{\partial \O_\delta} \Phi(x-y) (\nu_\delta (y) \times e(y))\, ds(y) -\frac{1}{i\omega\varepsilon} (\nabla \times)^2 \int_{\partial \O_\delta} \Phi(x-y) (\nu_\delta (y) \times h(y))\, ds(y)
\]
and
\[
h(x) = \nabla \times \int_{\partial \O_\delta} \Phi(x-y) (\nu_\delta (y) \times h(y))\, ds(y) + \frac{1}{i\omega\mu} (\nabla \times)^2 \int_{\partial \O_\delta} \Phi(x-y) (\nu _\delta(y) \times e(y))\, ds(y).
\]
Here \(\Phi\) is the outgoing fundamental solution of the Helmholtz--operator \(\Delta + k^2\) and \(\nu_\delta\) is the exterior unit normal of \(\partial \O_\delta\). Then get as \(\delta \to +0\)
\ba
e(x) & = & \nabla \times \int_{S} \Phi(x-y) (\nu \times \{e^+(y) - e^-(y)\})\, ds(y)\\ & - & \frac{1}{i\omega\varepsilon} (\nabla \times)^2 \int_{S} \Phi(x-y) (\nu \times \{h^+ (y) - h^-(y)\})\, ds(y)
\ea
and
\ba
h(x) & = & \nabla \times \int_{S} \Phi(x-y) (\nu \times \{h^+(y) - h^-(y)\})\, ds(y)\\ & + & \frac{1}{i\omega\mu} (\nabla \times)^2 \int_{S} \Phi(x-y) (\nu \times \{e^+ (y) - e^-(y)\})\, ds(y),
\ea
as claimed.
\end{proof}

In what follows we will denote the jumps of a function (or a vector field) \(u\) across \(S\) by \([u]\), i.e.
\[
[u](y) = u^+(y) - u^-(y), \, \, y\in S.
\]

\medskip

\subsection{EM--plane waves and far--field patterns} Let \(\theta,\, p\in S^2\) and denote \(q= p\times \theta\). We call the field
\[
E(\theta; p,q) = \mu^{1/2} (p\times \theta)e^{ik\langle \theta, x\rangle}, \, H(\theta; p,q) = \varepsilon^{1/2} (q\times \theta) e^{ik\langle \theta, x\rangle}
\]
the {\em EM-plane wave with wavenumber \(k\), propagation direction \(\theta\) and polarizations \(p\) and \(q\),} It is easy to see that these fields satisfy the time harmonic Maxwell's equations
\[
\nabla \times E(\theta; p,q) = i\omega\mu \, H(\theta; p,q), \quad \nabla \times H(\theta; p,q) = -i\omega\varepsilon \, E(\theta; p,q).
\]
when \(k^2 = S^2 \varepsilon \mu\). Since the scalar components of the scattered electric- and magnetic fields are solutions of the Helmholz--equation \((\Delta + k^2) u = 0\) satisfying the Sommerfeld's radiation condition they have representations
\[
E_{\sc}(x) = \frac{E^\infty (x)}{|x|} + o (|x|^{-1}), \quad H_{\sc}(x) = \frac{H^\infty (x)}{|x|} + o (|x|^{-1}
\]
where \(E^\infty\) and \(H^\infty\) are the electric and magnetic far--fields patterns. If the initial field is the EM-plane wave \((E(\theta; p,q), H(\theta; p,q))\) we denote the corresponding far--field patterns by
\(E^\infty(\theta; p,q)\) and \(H^\infty (\theta; p,q)\).

\subsection{Relevant Sobolev--spaces}
Let (see \cite{H}, \cite{OPS}) \(L^2_{\rm loc} (\mR^3\setminus S)\) be the space of measurable functions which are square integrable on compact subsets of \(\mR^3\setminus S)\).
This becomes a Fr\'echet--space when equipped with semi--norms
\[
\| f\|_{R} = \| f\|_{L^2(\mR^3\setminus  S)\cap B_R(0))}, \quad R>R_0,
\]
where \(R_0\) is so large that \(S \subset B_{R_0}(0)\). Define also
\[
L^2_{\rm loc, \, curl} (\mR^3\setminus  S) = \{u\in L^2_{\rm loc}(\mR^3\setminus  S); \, \nabla\times u \in L^2_{\rm loc}(\mR^3\setminus  S)\},
\]
\[
L^2_{\rm loc, \, div} (\mR^3\setminus  S) = \{u\in L^2_{\rm loc}(\mR^3\setminus  S); \, \nabla\cdot u \in L^2_{\rm loc}(\mR^3\setminus  S)\},
\]
and equip these space with seminorms
\[
\|f\|_{R, {\rm curl} }= (\|f\|_R^2 + \| \nabla \times f\|_R^2)^{1/2}, \, \|f\|_{R, {\rm div} }= (\|f\|_R^2 + \| \nabla \cdot f\|_R^2)^{1/2}.
\]
Also, let 
\[
TH^{-1/2}(S) = \{u\in H^{-1/2}(S)^3; \langle \nu, u\rangle = 0\}
\]
i.e the space of tangential \(H^{-1/2}\)--fields on \(S\). We equip this with the norm induced from \(H^{-1/2}(S)^3\). With \({\rm Div}\) denoting the surface divergence we also define
\[
TH^{-1/2}_{\rm Div}(S) = \{u\in TH^{-1/2}(S); \Div (u) \in H^{-1/2}(S)\}
\]
and equip it with the Hilbert--norm defined by
\[
\| u\|_{TH^{-1/2}_{\rm Div}(S)}^2 = \|u\|_{TH^{-1/2}(S)}^2 + \| \Div (u)\|_{H^{-1/2}(S)}^2.
\]
Assume now that \(U\in \mR^3\) is a bounded \(C^2\)--domain having connected complement such that \(S\subset \partial U\) as a compact \(C^2\)--submanifold and we fix the unit normal \(\nu\) of \(S\) so that it extends to a unit exterior normal \(\tilde\nu\) of \(U\). If \(u, \, \phi\in (C^\infty_0)^3\) then vector Green's identities give
\[
\int_{\partial U} \langle \tilde\nu \times u, \phi\rangle\, ds = \int_U \langle \nabla \times u, \phi\rangle - \langle u, \nabla\times \phi\rangle\, dx,
\]
and extending this by density to \(\varphi\in H^1(\mR^3)\) and \(u\in L^2_{curl} (U)\) gives the existence of the tangential trace \(\tilde\nu\times u|_{\partial U} \in TH^{-1/2}(\partial U)\). We can argue similarly for the exterior domain. Using this definition we have a well--defined tangential trace maps \(t^\pm\) from the direction of \(\pm \nu\),
\[
t^\pm: L^2_{\rm loc, {\rm curl}} (\mR^3\setminus S) \ni u\mapsto \nu\times u^\pm \in TH^{-1/2}(S)
\]
Similarly, if \(u\in (C^\infty_0)^3\), \(\psi\in C^\infty_0\) we get from the Divergence--theorem
\[
\int_U \langle \tilde\nu, u\rangle \, \psi \, ds = \int_U\langle u, \nabla \psi\rangle + \psi \nabla\cdot u\, dx,
\] 
and using this we have a well--defined normal traces \(n^\pm\), 
\[
L^2_{\rm loc, \, div} (\mR^3\setminus  S) \ni u\mapsto \langle \nu, u^\pm\rangle \in H^{-1/2}(S).
\]
Note also that if \(u\in L^2_{\rm loc, {\rm curl}} (\mR^3\setminus S)\), then \(\nabla\times u\in L^2_{\rm loc, \, div} (\mR^3\setminus  S)\) and 
\[
{\rm Div}\, (\nu\times u^\pm) = -\langle \nu, \nabla \times u^\pm\rangle \in H^{-1/2}(S),
\]
i.e the tangential traces of \(L^2_{\rm loc, {\rm curl}} (\mR^3\setminus S)\) are in \(TH^{-1/2}_{\rm Div} (S)\). Note also that since extension by zero across a \(C^2\)--hypersurface is continuous in fractional Sobolev--spaces \(H^s\) when \(s<1/2\) the space \(TC^\infty_0(S)\) is dense in \(TH^{s}(S)\). However, this  is not necessarily true for the Div--spaces, and hence the closure of \(TC^\infty_0(S)\) in \(TH^{-1/2}_{\rm Div}(S)\) is denoted by \(\dot{TH}^{-1/2}_{\rm Div}(S)\).
\medskip

\subsection{Layer Potentials in Sobolev--spaces}\label{sobolev}
For \(x\in \mR^3\setminus S)\) and \(u\in C^\infty_0 (S)^3\) define the (vector) single layer potential of \(u\) by
\[
V_{\mR^3\setminus S} (u)(x) = \int_S \Phi (x-y) \, u(y)\, ds(y)
\]
and electromagnetic layer operators then by
\[
K_{\mR^3\setminus S} (u)(x) = \nabla \times V_{\mR^3\setminus S} (u)(x), 
\]
and
\[
N_{\mR^3\setminus S} (u)(x) = (\nabla \times)^2 V_{\mR^3\setminus S} (u)(x).
\]

\begin{prop}
Assume that \(S\) can be extended to a boundary \(\partial U\) for some bounded \(C^2\)--domain \(U\). Then the single layer potential has an extension to a bounded map
\[
V_{\mR^3\setminus S}: H^{-1/2}(S) \to H^1_{\rm loc}�(\mR^3\setminus S)
\]
and \(V_{\mR^3\setminus S}(u)\) satisfies the Sommerfeld--radiation condition. Also,  the electromagentic potentials have extensions into bounded maps
\[
K_{\mR^3\setminus S}, \, N_{\mR^3\setminus S}: \dot{TH}_{\Div}^{-1/2}(S) \to L^2_{\rm loc, curl}(\mR^3\setminus S)
\]
and  \(K_{\mR^3\setminus S} (u)\) and \(N_{\mR^3\setminus S} (u)\) satisfy the Sommerfeld--radiation conditions for any \(u\in  \dot{TH}_\Div^{-1/2}(S)\).
\end{prop}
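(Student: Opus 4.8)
The plan is to reduce everything to the classical theory of the scalar single--layer potential on the closed $C^2$--surface $\partial U$, and then to exploit the algebraic identities relating $K_{\mR^3\setminus S}$, $N_{\mR^3\setminus S}$ and $V_{\mR^3\setminus S}$. Since $S$ is a compact $C^2$--submanifold of $\partial U$, I would first extend densities by zero. Because $-1/2<1/2$, the extension--by--zero map $e_0:H^{-1/2}(S)\to H^{-1/2}(\partial U)$ is bounded (the fact already invoked in the previous subsection), and for $u$ supported in the interior of $S$ one has $V_{\mR^3\setminus S}(u)=V_{\partial U}(e_0u)$ as functions on $\mR^3\setminus S$. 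The classical mapping property of the scalar single layer on a closed $C^2$--surface gives a bounded operator $V_{\partial U}:H^{-1/2}(\partial U)\to H^1_{\rm loc}(\mR^3)$ whose image consists of functions continuous across $\partial U$ and satisfying the Sommerfeld radiation condition. Composing with $e_0$ and restricting to $\mR^3\setminus S$ yields the asserted boundedness of $V_{\mR^3\setminus S}:H^{-1/2}(S)\to H^1_{\rm loc}(\mR^3\setminus S)$, and the radiation condition is inherited because it holds uniformly for $\Phi(x-\cdot)$ while $S$ is compact.

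For the electromagnetic operators I would first work with $u\in TC^\infty_0(S)$, where all integrals converge classically, and establish the bounds in the $TH^{-1/2}_{\Div}(S)$--norm; the extension to $\dot{TH}^{-1/2}_{\Div}(S)$ then follows by density and continuity. The core is the surface integration--by--parts identity
\[
\nabla\cdot V_{\mR^3\setminus S}(u)=V_{\mR^3\setminus S}^{\rm sc}(\Div u),
\]
valid for tangential $u$ (here $V^{\rm sc}$ is the scalar single layer and $\Div$ the surface divergence), together with the pointwise identity $(\nabla\times)^2=\nabla(\nabla\cdot)-\Lap$ and the fact that each Cartesian component of $V_{\mR^3\setminus S}(u)$ solves the Helmholtz equation off $S$, so that $\Lap V_{\mR^3\setminus S}(u)=-k^2V_{\mR^3\setminus S}(u)$ there. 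These give
\[
N_{\mR^3\setminus S}(u)=\nabla V^{\rm sc}_{\mR^3\setminus S}(\Div u)+k^2V_{\mR^3\setminus S}(u).
\]
Since $\Div u\in H^{-1/2}(S)$, the scalar result makes $V^{\rm sc}(\Div u)\in H^1_{\rm loc}$, hence $\nabla V^{\rm sc}(\Div u)\in L^2_{\rm loc}$, while $k^2V(u)\in H^1_{\rm loc}\subset L^2_{\rm loc}$; thus $N_{\mR^3\setminus S}(u)\in L^2_{\rm loc}$ with norm controlled by $\|u\|_{H^{-1/2}}+\|\Div u\|_{H^{-1/2}}=\|u\|_{TH^{-1/2}_{\Div}}$. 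Likewise $K_{\mR^3\setminus S}(u)=\nabla\times V(u)\in L^2_{\rm loc}$ because $V(u)\in H^1_{\rm loc}$.

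To obtain membership in $L^2_{\rm loc, curl}$ I would close up the curls: $\nabla\times K_{\mR^3\setminus S}(u)=N_{\mR^3\setminus S}(u)$ by definition, and $\nabla\times N_{\mR^3\setminus S}(u)=k^2\,\nabla\times V(u)=k^2K_{\mR^3\setminus S}(u)$, the gradient term dropping out since the curl of a gradient vanishes. Both right--hand sides have already been shown to lie in $L^2_{\rm loc}$, so $K_{\mR^3\setminus S}(u)$ and $N_{\mR^3\setminus S}(u)$ belong to $L^2_{\rm loc, curl}(\mR^3\setminus S)$ with the required bound. The radiation conditions follow exactly as for $V$: each Cartesian component of $K_{\mR^3\setminus S}(u)$ and $N_{\mR^3\setminus S}(u)$ is a fixed derivative of a radiating scalar single--layer potential (of $u$ or of $\Div u$), and differentiation preserves the Sommerfeld condition.

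The main obstacle is the passage to the closure $\dot{TH}^{-1/2}_{\Div}(S)$, that is, justifying that the constructions carried out on $TC^\infty_0(S)$ extend continuously. The delicate point is that, unlike the scalar case at index $-1/2$, extension by zero need not be bounded on $TH^{-1/2}_{\Div}(S)$; one must check that for $u\in TC^\infty_0(S)$ the identity $\Div(e_0u)=e_0(\Div u)$ holds with no spurious boundary contribution along $\partial S$ (because $u$ vanishes near $\partial S$), and that $\|e_0u\|_{TH^{-1/2}_{\Div}(\partial U)}\lesssim\|u\|_{TH^{-1/2}_{\Div}(S)}$, so that the uniform bounds on test functions survive in the limit. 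This is precisely why the statement is phrased for $\dot{TH}^{-1/2}_{\Div}(S)$ rather than the full space, and verifying the surface integration--by--parts identity at the level of $H^{-1/2}$--densities is the technical heart of the argument.
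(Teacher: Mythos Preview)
Your argument is correct and follows essentially the same route as the paper: reduce the scalar single layer to the known mapping properties on the closed boundary $\partial U$ via extension by zero and density of $C_0^\infty(S)$ in $H^{-1/2}(S)$, then handle $K_{\mR^3\setminus S}$ and $N_{\mR^3\setminus S}$ through the identities $(\nabla\times)^2 V(u)=\nabla V(\Div u)+k^2 V(u)$ and $(\nabla\times)^3 V(u)=k^2\nabla\times V(u)$ for $u\in TC_0^\infty(S)$, extending to $\dot{TH}^{-1/2}_{\Div}(S)$ by density. Your treatment is in fact more explicit than the paper's about why the closure $\dot{TH}^{-1/2}_{\Div}(S)$ is needed and about the radiation condition, but the skeleton of the proof is identical.
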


\begin{proof} By known continuity properties (see for example \cite{McL}) the single layer defines a continuous map \(H^{-1/2}(U) \to H^1_{\rm loc}(\mR^3\setminus \overline U)\), and infact \(V_{\mR^3\setminus\overline U}(\phi)\) is continuous across \(U\) and the jump in the normal derivative is equal to \(\phi |_U\). Hence the claim for \(V\) follows since \(C_0^\infty (S)\) is dense in \(H^{-1/2}(S)\). This also implies the claims for \(K_{\mR^3\setminus S}\) and \(N_{\mR^3\setminus S}\) since for \(u\in TC^\infty_0(S)\) we have
\[
(\nabla \times)^2 V_{\mR^3\setminus S}(u) = -\nabla V_{\mR^3\setminus S}({\rm Div}\, u) + k^2 V_{\mR^3\setminus S}(u)
\]
and
\[
(\nabla \times)^3 V_{\mR^3\setminus S}(u) = k^2 \nabla\times  V_{\mR^3\setminus S}(u).
\]
\end{proof}

Using this we can generalize the representation theorem \ref{repthm} to weak solutions:

\begin{prop}\label{weak-repthm}
Let \(S \subset \mR^3\) be a \(C^1\)--screen. Assume \((e,h)\in L^2_{\rm loc, curl}(\mR^3\setminus S) \times  L^2_{\rm loc, curl}(\mR^3\setminus S)\)
solves
\[
\nabla\times e  = i\omega\mu \, h , \quad \nabla\times h  = -i\omega\varepsilon \, e \quad \text{ in\, \,  } \mR^3 \setminus S,
\]
and the Silver--M\"uller--radiation condition
\[
\hat r\times e + \sqrt{\frac{\varepsilon}{\mu}} h = o(|x|^{-1}), \quad \hat r\times h - \sqrt{\frac{\mu}{\varepsilon}} e = o(|x|^{-1}), \quad \text{as \(|x|\to \infty\)}, \quad \hat r = x/|x| >0.
\]
If \(\nu\times [e], \, \nu\times [h]\in TH^{-1/2}_{\rm Div}(S)\), then in \(\mR^3\setminus \overline S\), 
\[
e =  K_{\mR^3\setminus \overline S}(\nu\times [e])  - \frac{1}{i\omega\varepsilon}N_{\mR^3\setminus \overline S}(\nu\times [h]),
\]
and
\[
h =  K_{\mR^3\setminus \overline S}(\nu\times [h]) + \frac{1}{i\omega\mu}N_{\mR^3\setminus \overline S}(\nu\times [e]).
\]
Here \(\nu\) is the specified unit normal of \(S\). 
\hfill \(\Box\)

\end{prop}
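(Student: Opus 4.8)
The plan is to verify the two formulas by the standard device of showing that their right--hand sides furnish a radiating solution with the same tangential jumps as $(e,h)$, and then appealing to uniqueness for the difference. Abbreviate $\phi = \nu\times[e]$ and $\psi = \nu\times[h]$ and set
\[
\tilde e = K_{\mR^3\setminus \overline S}(\phi) - \frac{1}{i\omega\varepsilon}N_{\mR^3\setminus \overline S}(\psi), \qquad \tilde h = K_{\mR^3\setminus \overline S}(\psi) + \frac{1}{i\omega\mu}N_{\mR^3\setminus \overline S}(\phi).
\]
By the preceding proposition $\tilde e, \tilde h\in L^2_{\rm loc, curl}(\mR^3\setminus S)$ and both satisfy the Sommerfeld radiation condition, provided $\phi,\psi\in\dot{TH}^{-1/2}_{\Div}(S)$. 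That the data indeed lie in this closure, and not merely in $TH^{-1/2}_{\Div}(S)$, is seen by noting that $(e,h)$ is a genuine $L^2_{\rm loc,curl}$ solution across the part $L\setminus\overline S$ of the supporting hyperplane, so its tangential traces from the two sides agree there; hence $\phi$ and $\psi$, extended by zero to $L$, are supported in $\overline S$ and lie in the closure of $TC^\infty_0(S)$. Using the identities $(\nabla\times)^2 V(u) = -\nabla V(\Div u) + k^2 V(u)$ and $(\nabla\times)^3 V(u) = k^2\nabla\times V(u)$ of the preceding proposition together with $k^2 = \omega^2\varepsilon\mu$, a direct computation shows that $(\tilde e,\tilde h)$ solves Maxwell's equations in $\mR^3\setminus S$.

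The heart of the matter is the jump relation
\[
\nu\times[\tilde e] = \phi, \qquad \nu\times[\tilde h] = \psi .
\]
I would establish this first for smooth densities $\phi,\psi\in TC^\infty_0(S)$. For such densities $(\tilde e,\tilde h)\in (C^1_S)^3\times(C^1_S)^3$, so Proposition \ref{repthm} applies to $(\tilde e,\tilde h)$ and reproduces precisely the formulas defining them; matching the two representations forces the tangential jumps of $\tilde e$ and $\tilde h$ to be $\phi$ and $\psi$. Equivalently one may invoke the classical $\pm\tfrac12$ jump relations for $\nu\times K$ and $\nu\times N$. The general case then follows by density of $TC^\infty_0(S)$ in $\dot{TH}^{-1/2}_{\Div}(S)$, combined with the continuity of the trace maps $t^\pm$ and of $K_{\mR^3\setminus\overline S}, N_{\mR^3\setminus\overline S}$ from the preceding proposition.

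Finally, put $w = e-\tilde e$ and $v = h-\tilde h$. Then $(w,v)$ is a radiating $L^2_{\rm loc,curl}$ solution of Maxwell's equations in $\mR^3\setminus S$ whose tangential jumps $\nu\times[w]$ and $\nu\times[v]$ both vanish. Vanishing of both tangential jumps means there is no surface current, so $(w,v)$ solves Maxwell's equations distributionally across the interior $S^\circ$, and trivially across $L\setminus\overline S$; since the remaining set $\partial S$ has codimension two it is removable for the $L^2_{\rm loc}$ field, and $(w,v)$ is therefore a radiating distributional solution in all of $\mR^3$. By elliptic regularity it is smooth, and an entire radiating solution of Maxwell's equations vanishes identically, because the spherical--wave expansion of an entire solution involves only the regular harmonics $j_\ell$, none of which is outgoing. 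Hence $w=v=0$, which is the assertion.

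The step I expect to be the main obstacle is the jump relation in the low--regularity space $\dot{TH}^{-1/2}_{\Div}(S)$, and in particular the behaviour at the screen boundary $\partial S$: one must check that the classical jump relations survive the passage to the closure of $TC^\infty_0(S)$, and that the extension by zero of the jump data genuinely lands in $\dot{TH}^{-1/2}_{\Div}(S)$ and not only in $TH^{-1/2}_{\Div}(S)$, since otherwise the layer operators cannot even be applied to it.
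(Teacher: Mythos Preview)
The paper gives no proof of this proposition at all: it is stated with a terminal~\(\Box\) immediately after the sentence ``Using this we can generalize the representation theorem \ref{repthm} to weak solutions,'' so the intended argument is simply to rerun the collar--neighbourhood computation of Proposition~\ref{repthm} in the Sobolev setting, invoking the continuity of \(V,K,N\) on \(\dot{TH}^{-1/2}_{\Div}(S)\) established just before to pass to the limit \(\delta\to 0\).

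Your argument is correct but proceeds by a genuinely different route: instead of revisiting the collar limit, you define \((\tilde e,\tilde h)\) as the right--hand sides, verify that they are a radiating Maxwell pair with the prescribed tangential jumps (first for smooth densities, then by density in \(\dot{TH}^{-1/2}_{\Div}(S)\)), and conclude by showing that the difference \((e-\tilde e,\,h-\tilde h)\) is an entire radiating solution and hence zero. This ``uniqueness'' proof has the advantage that it isolates exactly where the dotted space is needed (namely to apply \(K,N\) and to run the density argument for the jump relations), and your remark that the hypothesis \(\nu\times[e],\nu\times[h]\in TH^{-1/2}_{\Div}(S)\) is not quite enough---one really needs \(\dot{TH}^{-1/2}_{\Div}(S)\), which you recover from the fact that \((e,h)\) is a genuine solution across \(L\setminus\overline S\)---is a point the paper glosses over. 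The paper's implicit approach is shorter once one accepts the Sobolev--space Stratton--Chu formula for the smooth collar \(\partial\mathcal O_\delta\), but yours is more self--contained and makes the role of the jump data and of the screen edge \(\partial S\) explicit.
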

\medskip

\subsection{Representation Formulas for the Scattered Field}

\begin{prop}\label{screen-rep} Let \(S\) be a perfectly conducting \(C^2\)--screen, and let 
\[
(E_{sc}, H_{sc})\in L^2_{\rm loc, curl} (\mR^3\setminus S) \times L^2_{\rm loc, curl} (\mR^3\setminus S) 
\] 
be the scattered field corresponding to an incoming field \((E_0, H_0)\). Then in \(\mR^3\setminus \overline S\) one has
\[
E_{\rm sc} = -\frac{1}{i\omega\varepsilon} N_{\mR^3\setminus \overline S}(\nu\times [H_{\rm sc}])
\]
and
\[
H_{\rm sc}  = K_{\mR^3\setminus \overline S}(\nu\times [H_{\rm sc}]).
\]
These fields have the following asymptotic behaviour as \(|x|\to \infty\):
\[
E_{\rm sc} (x) = -\hat x\times \left(\hat x \times \frac{e^{ik|x|}}{4\pi i\omega\varepsilon |x|}\int_S e^{-ik\langle \hat x, y\rangle} (\nu\times [H_{\rm sc}])(y) \, ds(y)\right) + O(|x|^{-2}), 
\]
\[
H_{\rm sc} (x) = \hat x \times \frac{e^{ik|x|}}{4\pi i\omega\mu |x|}\int_S e^{-ik\langle \hat x, y\rangle} (\nu\times [H_{\rm sc}])(y) \, ds(y) + O(|x|^{-2}), 
\]
where \(\hat x = x/|x|, \, x\not = 0\).
\end{prop}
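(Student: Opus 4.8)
The plan is to read this off from the weak representation theorem (Proposition \ref{weak-repthm}) applied to the pair $(e,h)=(E_{\rm sc},H_{\rm sc})$, the only extra ingredient being the perfectly conducting condition \eqref{screen cold}. First I would check the hypotheses of Proposition \ref{weak-repthm}: by assumption both components of $(E_{\rm sc},H_{\rm sc})$ lie in $L^2_{\rm loc, curl}(\mR^3\setminus S)$, the pair solves Maxwell's equations in $\mR^3\setminus S$ and satisfies the Silver--M\"uller conditions \eqref{rad}, while the trace theory of Subsection 1.4 guarantees that the tangential jumps $\nu\times[E_{\rm sc}]$ and $\nu\times[H_{\rm sc}]$ belong to $TH^{-1/2}_{\rm Div}(S)$. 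Proposition \ref{weak-repthm} then gives
\[
E_{\rm sc}=K_{\mR^3\setminus\overline S}(\nu\times[E_{\rm sc}])-\frac{1}{i\omega\varepsilon}N_{\mR^3\setminus\overline S}(\nu\times[H_{\rm sc}]),
\]
\[
H_{\rm sc}=K_{\mR^3\setminus\overline S}(\nu\times[H_{\rm sc}])+\frac{1}{i\omega\mu}N_{\mR^3\setminus\overline S}(\nu\times[E_{\rm sc}]).
\]

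The decisive point is that $\nu\times[E_{\rm sc}]=0$. The incident field $(E_0,H_0)$ is an entire solution of Maxwell's equations, so $E_0$ is smooth across $S$ and its two tangential traces coincide, $t^+E_0=t^-E_0$. Reading \eqref{screen cold} as the identity $t^\pm E_{\rm sc}=-\nu\times E_0$ in $TH^{-1/2}(S)$ from each face of the screen and subtracting the two, I get $\nu\times[E_{\rm sc}]=t^+E_{\rm sc}-t^-E_{\rm sc}=0$. Substituting this into the two displayed formulas deletes the $K$--term in the expression for $E_{\rm sc}$ and the $N$--term in the expression for $H_{\rm sc}$, which is exactly the claimed representation with the single density $\nu\times[H_{\rm sc}]$.

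For the asymptotics I would substitute the standard far--field expansion of the fundamental solution,
\[
\Phi(x-y)=\frac{e^{ik|x|}}{4\pi|x|}\,e^{-ik\langle\hat x,y\rangle}+O(|x|^{-2}),
\]
valid uniformly for $y\in\overline S$, into $V_{\mR^3\setminus\overline S}(u)$ with $u=\nu\times[H_{\rm sc}]$, and write $a(\hat x)=\int_S e^{-ik\langle\hat x,y\rangle}u(y)\,ds(y)$. Differentiating, each application of $\nabla_x$ contributes the factor $ik\hat x$ to leading order. Hence $H_{\rm sc}=K_{\mR^3\setminus\overline S}(u)=\nabla\times V_{\mR^3\setminus\overline S}(u)$ has leading term proportional to the single cross product $\hat x\times a$, whereas $E_{\rm sc}=-\tfrac{1}{i\omega\varepsilon}N_{\mR^3\setminus\overline S}(u)$, built from $(\nabla\times)^2V_{\mR^3\setminus\overline S}(u)$, produces $(ik\hat x)\times\big((ik\hat x)\times a\big)$, i.e. the double cross product $-\hat x\times(\hat x\times a)$, which is automatically tangential to the sphere as a far field must be. Collecting the constants with the dispersion relation $k^2=\omega^2\varepsilon\mu$ yields the stated expansions.

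The hard part is making this expansion rigorous for a density $u$ that is only a distribution in $\dot{TH}^{-1/2}_{\rm Div}(S)$ rather than a continuous function: the integrals defining $a(\hat x)$ and the remainders must be interpreted as $H^{-1/2}$--$H^{1/2}$ dualities against the smooth kernels $y\mapsto\nabla^\alpha_x\Phi(x-y)$, and I would need to justify differentiating under the pairing and bounding the remainder uniformly in $\hat x$ to obtain the genuine $O(|x|^{-2})$ estimate. When reducing $(\nabla\times)^2V_{\mR^3\setminus\overline S}$ to gradients of scalar single layers one incurs a term involving $\Div u$, and the accompanying surface integration by parts is legitimate precisely because $u$ lies in the closure $\dot{TH}^{-1/2}_{\rm Div}(S)$ of compactly supported smooth tangential fields, so the boundary contribution along $\partial S$ vanishes. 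Once these density--regularity issues are settled, the rest is routine bookkeeping of constants.
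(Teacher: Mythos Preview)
Your proof is correct and follows essentially the same route as the paper: invoke the representation theorem together with the observation that $\nu\times[E_{\rm sc}]=0$ on a perfectly conducting screen (since $\nu\times[E_{\rm sc}]=-\nu\times[E_0]=0$), then expand $\Phi(x-y)$ and its curls to read off the far-field asymptotics. The paper cites Proposition~\ref{repthm} rather than Proposition~\ref{weak-repthm} and simply declares the asymptotics ``obvious'' from the pointwise expansions of $\nabla_x\times\Phi(x-y)a(y)$ and $(\nabla_x\times)^2\Phi(x-y)a(y)$, without your more careful discussion of the distributional pairing, but the argument is the same.
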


\begin{proof}
Since on a perfectly conducting screen \(\nu\times [E_{\rm sc}] = -\nu\times [E_0] = 0\) the representations of \(E_{\rm sc}\) and \(H_{\rm sc}\) follow from Proposition \ref{repthm}. The asymptotic behaviour is obvious since for \(|x|\to \infty, \, y\in S\) and \(a(y)\) a vector field on \(S\), 
\[
\nabla_x \times \frac{e^{ik|x-y|}}{|x-y|}�a(y) = \hat x \times\frac{e^{ik|x|- ik\langle \hat x, y\rangle}}{|x|} a(y)  + O(|x|^{-2})
\]
\[
(\nabla_x \times)^2 \frac{e^{ik|x-y|}}{|x-y|}�a(y) = \hat x�\times (\hat x \times\frac{e^{ik|x|- ik\langle \hat x, y\rangle}}{|x|} a(y))  + O(|x|^{-2}).
\]
\end{proof}
\noindent In view of the above proposition we can write
\[
E_{\rm sc}(x) = \frac{e^{ik|x|}}{4\pi |x|} E^\infty (\hat x)  + O(|x|^{-2}), \quad H_{\rm sc}(x) = \frac{e^{ik|x|}}{4\pi |x|} H^\infty (\hat x)  + O(|x|^{-2})
\]
where the far--fields patterns \(E^\infty\) and \(H^\infty\) are given by
\[
E^\infty(\hat x) = -\hat x\times \left(\hat x \times \frac{1}{i\omega\varepsilon}\int_S e^{-ik\langle \hat x, y\rangle} (\nu\times [H_{\rm sc}])(y) \, ds(y)\right)
\]
\[
H^\infty(\hat x) = \hat x \times \frac{1}{i\omega\mu}\int_S e^{-ik\langle \hat x, y\rangle} (\nu\times [H_{\rm sc}])(y) \, ds(y).
\]
Note also that \(\varepsilon E^\infty (\hat x) = -\mu\, \hat x\times H^\infty (\hat x)\), and that the uniqueness of the scattered field follows from the uniqueness of the Dirichlet-- and Neumann--problems for the scalar Helmholtz--equation \cite{st}.
\medskip

\subsection{Integral Equations for the Scattered Field}
Assume now that \(U\subset \mR^3\) is a bounded \(C^2\)--domain having a connected complement such that \(S \subset \partial U\) is a \(C^2\)--submanifold. By the usual jump--relations (see for example \cite{ck1}, \cite{OPS}) tangential components of \(N_{\mR^3 \setminus\overline U} (u)\) and \(N_{U}(u)\) are continuous up to \(\partial U\) and they have equal traces for all \(u\in TH^{-1/2}_{\rm Div}(\partial U)\). Futhermore, for \(u\in TH^{-1/2}_{\rm Div}(\partial U)\) one has
\[
\nu\times N_{\mR^3 \setminus \overline U} (u)|_{\partial U} = \nu\times N_{U} (u)|_{\partial U} = N(u),
\]
where the surface integral operator \(N\) is given by
\[
N(u) = \tilde \nu \times \nabla S({\rm Div}\, u) + k^2 \tilde\nu \times S(u).
\]
Here \(\tilde \nu\) is the exterior unit normal to \(U\) which is assumed to agree with \(\nu\) on \(S\), and \(S\) is the direct boundary value of the single layer i.e 
\[
S(f)(x) = \int_{\partial U} \Phi (x-y) \, f(y)\, ds(y), \quad x\in \mR^3.
\]
Note also that by trace theorems given in subsection \ref{sobolev} on has \(N: TH^{-1/2}_{\rm Div} (\partial U) \to TH^{-1/2}_{\rm Div} (\partial U)\) continuously, and for the restriction to \(S\) one has
\(N: \dot{TH}^{-1/2}_{\rm Div} (S) \to {TH}^{-1/2}_{\rm Div} (S)\) again continuously. 

Assuming now that \((E_{sc}, H_{sc})\) is the field scattered by the screen \(S\) and \(\nu \times [H_{sc}]\in \dot{TH}^{-1/2}_{\rm Div}(S)\) we get from the Proposition \ref{screen-rep} and the continuity results of subsection \ref{sobolev} that
\[
\nu\times E_{sc}�= iN(\nu\times [H_{sc}])/\omega\varepsilon,
\]
and since the screen is perfectly conducting one gets an integral equation for the jump of the tangential component of the magnetic field,
\begin{equation}\label{IE}
-\nu\times E_0�= iN(\nu\times [H_{sc}])/\omega\varepsilon, \quad \nu\times [H_{sc}]\in \dot{TH}^{-1/2}_{\rm Div}(S).
\end{equation}
Solvability properties of this equation have been considered in \cite{bc}. More precisely, the showed that 
\eqref{IE} is uniquely solvable\footnote{This needs to be checked - they use a scale of function spaces tailored to the variational approach used, and also they do not assume orientablity of the screen, i.e for example a M\"obius--strip would be ok.} in \(\dot{TH}^{-1/2}(S)\)
\bigskip

\section{Solution of the Inverse Problem} 
\medskip

\subsection{Uniqueness when the supporting hyperplane is konwn.} 
The following lemma shows that for a planar screen the tangential density of the far--field pattern is uniquely determined.

\begin{lemma}
Assume that \(\rho\) is  a compactly supported tangential distributional density on a hyperplane \(L\). Let
\[
\rho^\infty (\hat x) = \hat x \times \langle \rho, \exp\{-ik\langle \hat x, \cdot\rangle\}\rangle, \quad \hat x \in \Ss^2.
\]
Then the map \(\rho \mapsto \rho^\infty\) is injective. 
\end{lemma}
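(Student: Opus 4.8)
The plan is to reduce everything to a statement about the Fourier transform of a compactly supported distribution on a two--dimensional plane, and then to exploit analyticity. First I would normalise coordinates: after an orthogonal change of variables and a translation (both of which act equivariantly on the cross product, and only multiply the relevant exponentials by a unimodular phase, hence do not affect injectivity) I may assume that \(L = \{x_3 = 0\}\) with unit normal \(\nu = e_3\). Writing the tangential density as \(\rho = g\,ds_L\) with \(g = (g_1, g_2, 0)\) a compactly supported vector--valued distribution on \(\mR^2\), the pairing appearing in the statement becomes
\[
\langle \rho, \exp\{-ik\langle \hat x, \cdot\rangle\}\rangle = \widehat g(k\hat x'),
\]
where \(\hat x = (\hat x', \hat x_3)\), \(\hat x' = (\hat x_1, \hat x_2)\), and \(\widehat g(\xi') = \langle g, \exp\{-i\langle \xi', \cdot\rangle\}\rangle\) is the two--dimensional Fourier transform. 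The crucial structural point is that this quantity depends only on the tangential frequency \(\hat x'\), and that, by the Paley--Wiener--Schwartz theorem, \(\widehat g\) is the restriction to \(\mR^2\) of an entire function on \(\C^2\).

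Next I would extract information from the hypothesis \(\rho^\infty \equiv 0\), i.e. \(\hat x \times \widehat g(k\hat x') = 0\) for every \(\hat x \in \mathbb{S}^2\). Since \(\widehat g(k\hat x')\) has vanishing third component (tangentiality), the vanishing of the cross product forces \(\widehat g(k\hat x')\) to be parallel to \(\hat x\); but a vector whose third component is zero can be parallel to \(\hat x = (\hat x', \hat x_3)\) only if it vanishes or \(\hat x_3 = 0\). Hence \(\widehat g(k\hat x') = 0\) for every \(\hat x \in \mathbb{S}^2\) with \(\hat x_3 \neq 0\). This is precisely the step where tangentiality is indispensable: a purely normal component would become a radial vector after transform, would be annihilated by the operation \(\hat x\times(\cdot)\), and could never be recovered from \(\rho^\infty\).

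Finally I would run the analytic continuation argument. As \(\hat x\) ranges over the sphere with \(\hat x_3 \neq 0\), the point \(k\hat x'\) ranges over the entire open disk \(\{|\xi'| < k\} \subset \mR^2\). Thus the entire function \(\widehat g\) vanishes on a nonempty open subset of \(\mR^2\) and therefore vanishes identically; inverting the Fourier transform gives \(g = 0\), that is \(\rho = 0\).

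I expect the only genuine subtleties to lie in the first two steps: confirming that the reduction to \(\nu = e_3\) is harmless and that a tangential surface density really does have a Fourier transform independent of the normal frequency, together with the clean observation that tangentiality is exactly what makes \(\hat x \times (\cdot)\) injective on the accessible frequencies. The analytic continuation in the last step is the standard mechanism (as in the scalar case of \cite{BPS}), but it is worth emphasising that only the frequencies inside the disk of radius \(k\) are directly measured, so analyticity is genuinely doing the work of propagating the information to all frequencies.
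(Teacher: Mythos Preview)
Your proof is correct and follows essentially the same route as the paper: normalise so that \(L=\{x_3=0\}\), write the density as \((a_1,a_2,0)\,d\sigma\) with \(a_j\in\mathcal E'(\mR^2)\), observe that the far--field vanishing forces \(\xi\times(\hat a_1(\xi'),\hat a_2(\xi'),0)=0\) for \(\xi=(\xi',(k^2-|\xi'|^2)^{1/2})\), and then use Paley--Wiener analyticity to conclude. Your write--up is in fact more careful than the paper's on two points: you spell out exactly why tangentiality together with \(\hat x_3\neq 0\) forces the transformed vector to vanish (the paper jumps straight to ``hence \(\hat a_1,\hat a_2\) vanish''), and you correctly identify the set of accessible tangential frequencies as the open disk of radius \(k\), where the paper says ``unit ball''.
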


\begin{proof}
We may assume that coordinates have been chosen so that \(L\) is defined by 
\(\{x; \, x_3 = 0\}\). Let \(\rho = ad\sigma\) where \(a= (a_1, a_2) \in \E'(\mR^2)\) and \(d\sigma\) is the surface measure on the hyperplane \(L\).  Then \(\rho^\infty = 0\) is equivalent with
\[
\xi \times (\hat a_1 (\xi'), \hat a_2 (\xi'), 0) = 0,\, \xi = (\xi', (k^2 - |\xi'|^2)^{1/2}), \, |\xi'|<k,
\]
and hence \(\hat a_1\) and \(\hat a_2\) vanish in the unit ball of \(\mR^2\) and since they are entire functions they are identically zero.
\end{proof}

This implies that a the far--field \(E^\infty\) (or \(H^\infty\) for that matter) uniquely determines the density \([\nu\times H_{\rm sc}]\, ds\) when the screen \(S\) is flat, i.e. included in a hyperplane.

\begin{prop}
Let \(S\) be a \(C^2\)--screen contained in a supporting hyperplane \(L\) and let \((e_0, h_0)\) be an electromagnetic plane wave with wave number \(k\) and having electric and magnetic polarisations \(p\) and \(q\).
Assume that  \(\rho \in \dot{TH}^{-1/2}(S)\) solves \(-\nu\times e_0 = -iN( \nu \times \rho)/\omega\varepsilon\) on \(S\). Then if \(p\) or \(q\) are not parallel to \(\theta\) the density \(\rho\) has full support, i.e. \(\supp (\rho) = S\).
\end{prop}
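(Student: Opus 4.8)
The plan is to argue by contradiction, exploiting the flatness of the screen together with a reflection principle for Maxwell's equations across the supporting hyperplane $L$.

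Suppose the conclusion fails, so that $\supp(\rho)\subsetneq S$; then there is a nonempty relatively open set $\Sigma\subset S$ on which $\rho$ vanishes. By the representation formulas of Proposition \ref{screen-rep} the scattered fields are $E_{sc}=-\frac{1}{i\omega\varepsilon}N_{\mR^3\setminus\overline S}(\nu\times\rho)$ and $H_{sc}=K_{\mR^3\setminus\overline S}(\nu\times\rho)$, and since these are layer potentials whose density is supported in $\supp(\rho)$ they are real--analytic off $\supp(\rho)$; in particular they extend across $\Sigma$ with no jump. Consequently the total field $(E,H)=(E_0+E_{sc},H_0+H_{sc})$ is a real--analytic solution of Maxwell's equations in a full three--dimensional neighbourhood $B$ of a point of $\Sigma$, and because $S$ is perfectly conducting it satisfies $\nu\times E=0$ on $B\cap L=B\cap\Sigma$.

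I would then choose coordinates so that $L=\{x_3=0\}$ and $\nu=e_3$, and introduce the PEC reflection $\mathcal RE(x)=(-E_1,-E_2,E_3)(Rx)$, $\mathcal R'H(x)=(H_1,H_2,-H_3)(Rx)$ with $R=\mathrm{diag}(1,1,-1)$; one checks that $\mathcal R$ sends Maxwell solutions to Maxwell solutions. The reflection principle then applies: the vanishing of $\nu\times E$ on $B\cap L$ forces $\mathcal RE=E$ and $\mathcal R'H=H$ throughout $B$ (glue $E$ on $\{x_3>0\}$ to $\mathcal RE$ on $\{x_3<0\}$, note the Cauchy data match on $L$, and use unique continuation against the given analytic solution). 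The crucial use of \emph{flatness} enters next: since $\supp(\rho)\subset L$ is fixed pointwise by $R$, both $E_{sc}$ and $\mathcal RE_{sc}$ are real--analytic on the connected set $\mR^3\setminus\supp(\rho)$, so the identity $E_0-\mathcal RE_0=\mathcal RE_{sc}-E_{sc}$, valid on $B$, continues analytically to all of $\mR^3\setminus\supp(\rho)$. Letting $|x|\to\infty$ and using that $E_{sc}$ and its reflection decay like $|x|^{-1}$, the almost--periodic entire field $E_0-\mathcal RE_0$ must vanish identically, and likewise $H_0=\mathcal R'H_0$.

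Finally I would read off the polarization constraint. Writing $E_0=\mu^{1/2}q\,e^{ik\langle\theta,x\rangle}$ with $q=p\times\theta$ and $S_0=\mathrm{diag}(-1,-1,1)$, one has $\mathcal RE_0=\mu^{1/2}(S_0q)e^{ik\langle R\theta,x\rangle}$, so $E_0=\mathcal RE_0$ forces either $q=0$ or else $R\theta=\theta$ together with $S_0q=q$. Since $q=p\times\theta$ is orthogonal to $\theta$, the hypothesis that $p$ (equivalently $q$) is not parallel to $\theta$ gives $q\neq0$; hence the only surviving possibility is the grazing configuration $\theta_3=0$ with $q\parallel\nu$. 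In every other case a contradiction is immediate: if $\theta_3\neq0$ then $q=0$, and if $\theta_3=0$ with $q\not\parallel\nu$ then $E_0=\mathcal RE_0$ is impossible. I expect the main obstacle to be twofold. The analytic part is making the reflection principle rigorous for the weak ($L^2_{\rm loc, curl}$) solutions at hand and verifying that reflection preserves the decay needed to annihilate $E_0-\mathcal RE_0$ at infinity. The genuinely delicate point, however, is the grazing case $\theta_3=0$, $q\parallel\nu$: there $\nu\times E_0\equiv0$ on $S$, so by injectivity of $N$ one gets $\rho\equiv0$, i.e. the degenerate, vanishing--far--field situation. This is exactly the case the bare polarization condition does not by itself exclude, and it must be ruled out by the standing assumption that the measured far field is non--vanishing; combined with the preceding dichotomy this yields the desired contradiction and hence $\supp(\rho)=S$.
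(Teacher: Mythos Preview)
Your argument is essentially the paper's own: reflect across the supporting hyperplane, use unique continuation to show the total field acquires the reflection symmetry, and then exploit the decay of the scattered part at infinity to force a symmetry constraint on the incident plane wave. Where you propagate the identity \(E_0-\mathcal{R}E_0=\mathcal{R}E_{sc}-E_{sc}\) from the small ball \(B\) to all of \(\mR^3\setminus\supp(\rho)\) by real--analyticity, the paper instead shows \((E,H)=(\hat E,\hat H)\) in \(\mR^3\setminus S\) and then writes \(e_0=\hat E+\tilde e\) as a sum of a parity \(-1\) and a parity \(+1\) field; these are two bookkeepings of the same reflection principle.

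Your explicit isolation of the grazing case \(\theta_3=0\), \(q\parallel\nu\) is a genuine point, and in fact one the paper's own computation does not resolve. The paper asserts that the parity \(+1\) part of \(e_0\) vanishes iff \(q=0\), but with the correct even/odd splitting in \(x_3\) the normal component carries a factor \(\sin(k\theta_3 x_3)\), which is identically zero when \(\theta_3=0\); then \(q=q_3\nu\neq 0\) is also allowed, and one checks the companion condition on \(h_0\) is satisfied automatically. In that configuration \(\nu\times e_0\equiv 0\) on \(L\), so \(\rho=0\) solves the integral equation and the conclusion \(\supp(\rho)=S\) fails. Your proposed remedy---appealing to the non--vanishing far--field hypothesis that is in any case needed in the determination of the supporting hyperplane---is the right way to close this gap; the bare polarization hypothesis ``\(p\) not parallel to \(\theta\)'' does not by itself exclude it.
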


\begin{proof} Assume coordinates chosen so that the \(L = \{x\in\mR^3; \, x_3 = 0\}\). Assume that there is a relatively open \(U\subset S\) such that \(\rho = 0\). Define
\[
\tilde h = K_{\mR^3 \setminus S} (\nu \times \rho), \quad \tilde e = \frac{i}{\omega \varepsilon}�\nabla \times \tilde h.
\]
Then \(\tilde h, \, \tilde e\in L^2_{\rm loc, \, curl} (\mR^2 \setminus S)\) and they satisfy Maxwell's equations
\[
\nabla \times \tilde e = i\omega \mu \, \tilde h, \quad \nabla \times \tilde h = -i\omega \varepsilon \,  \tilde e.
\]
The second equation follows from the definition and the first is an immediate consequence of vector Green's formulas: 
\ba
\nabla \times \tilde e & = & \frac{i}{\omega \varepsilon}�(\nabla \times)^2 \, \tilde h = \frac{i}{\omega \varepsilon}(\nabla\nabla\cdot - \Delta)(K_{\mR^3 \setminus S} (\nu \times \rho)) = \frac{i}{\omega \varepsilon}(\nabla\nabla\cdot - \Delta)(\nabla\times S_{\mR^3 \setminus S} (\nu \times \rho))\\
& = & \frac{i}{\omega \varepsilon} k^2 \, \nabla\times S_{\mR^3 \setminus S} (\nu \times \rho) = -i\omega \mu \nabla\times S_{\mR^3 \setminus S} (\nu \times \rho) = -i\omega \mu \, \tilde h.
\ea
Also by jump relations of the vector potentials \([\nu\times \tilde h] = [\nu\times \tilde \rho] = 0\) on \(U\) and 
\[
-\tilde e = -\frac{i}{\omega\varepsilon}\nabla\times \tilde h  =  -\frac{i}{\omega\varepsilon}N_{\mR^3\setminus S} (\nu\times \rho) 
\]
so that 
\[
\nu\times \tilde e|_S = iN(\nu\times \rho)/\omega\varepsilon|_S = \nu\times e_0.
\]
Let \(E = e_0 - \tilde e\) and \(H = h_0 - \tilde h\). Then by the above observations \((E,H)\) solves \eqref{Maxwell} and
\[
\nu\times E|_S = 0, \quad [\nu \times H]|_U = 0.
\]
Let now \(\hat E\) be an extension of \(E\) from the upper half--space \(\{x_3 >0\}\) to lower half--space so that it is odd in the tangential components and even in the normal component, i.e 
\[
\hat E (x_1, x_2, -x_3) = (-E_1 (x), -E_2(x), E_3 (x)), \,\, x = (x_1, x_2, x_3),
\]
and let \(\hat H\) be an extension of \(H\) which is even in tangential components and odd in the normal component,
\[
\hat H (x_1, x_2, -x_3) = (H_1 (x), H_2(x), -H_3 (x)), \,\, x = (x_1, x_2, x_3).
\]
Then a straightforward computation shows that
\[
\nabla \times \hat E = i\omega \mu \, \hat H, \quad \nabla \times \hat H = -i\omega \varepsilon \,  \hat E, \quad x_3 \not = 0.
\]
Let \(V = U\times \mR\). Then since the tangential components of \(E\) vanish on \(S\) and the tangential components of \(H\) are continuous across \(U\) this holds also for tangential components of \(\hat E\) and \(\hat H\) across \(U\), thus \((\hat E, \hat H)\) solves 
\[
\nabla \times \hat E = i\omega \mu \, \hat H, \quad \nabla \times \hat H = -i\omega \varepsilon \,  \hat E, \quad x\in V,
\]
and hence by unique continuation \(E = \hat E\) and \(H = \hat H\) in \(V\) and thus also in \(\mR^3 \setminus S\)  since \(\nu \times \hat H = \nu \times H\) on \(U\). We can hence write
\begin{equation}\label{even-odd}
e_0 = \hat E + \tilde e, \quad h_0 = \hat H + \tilde h.
\end{equation}
Lets define that a vector field has parity \(1\) if the tangential components are even and the normal component is odd with respect to \(\{x_3 = 0\}\), and it has parity \(-1\) if the tangential components are odd and the normal component is even. Notice then that since \(\tilde h\) is the EM-double layer of a tangential density, it has parity \(-1\), and hence \(\tilde e = -i\nabla \times \tilde h/\omega\varepsilon\) has parity \(1\). Also, the decomposition of a field as a sum of fields with parity \(+1\) and \(-1\) is unique. Since \(\tilde e\) satisfies the Silver--M\"uller radiation condition, the incoming field \(e_0\) must have parity \(-1\) and similarly \(h_0\) must have parity \(1\). Recall that
\[
e_0 (x) = \mu^{1/2} (p\times \theta)e^{ik\langle \theta, x\rangle}, \, h_0(x)  = \varepsilon^{1/2} (q\times \theta) e^{ik\langle \theta, x\rangle}, \quad q = p\times \theta,
\] 
Hence the parity \(1\) part of \(e_0\) is given by
\[
\mu^{1/2}(q_1 e_0^{(+)}, q_2 e_0^{(+)}, iq_3 e_0^{(-)}),
\] 
where \(e_0^{(+)}(x) = \cos (k\langle\theta, x\rangle)\) and \(e_0^{(-)}(x) = \sin (k\langle\theta, x\rangle)\). This vanishes identically if and only if \(q = 0\), i.e \(p\times \theta = 0\). Similarly, the parity \(-1\) part of \(h_0\) is 
\[
\varepsilon^{1/2}(i(q\times\theta)_1 e_0^{(-)}, i(q\times \theta)_2 e_0^{(-)}, (q\times\theta)_3 e_0^{(+)}),
\] 
which vanishes identically if and only if \(q\times \theta = 0\). Since \(p\), \(q\) and \(\theta\) are unit vectors and \(q= p \times \theta\) this is not possible. \(\Box\)
\end{proof}
\medskip

\subsection{Unique Determination of a Planar Screen}

We show that the supporting hyperplane is uniquely determined the the far--field of a single scattering solution. This combined with the unique determination results of the previous subsection then proves Theorem \ref{mainthm}.

\begin{prop}
Assume \(S_1\) and \(S_2\) are two planar screens contained in supporting hyperplanes \(\pi_1\) and \(\pi_2\) respectively. Assume \(u_1 = (e_1, h_2)\) and \(u_2 = (e_2, h_2)\) scattering solutions for the screens \(S_1\) and \(S_2\) corresponding to the same initial field and having equal non--vanishing far fields. Then \(\pi_1 = \pi_2\).
\end{prop}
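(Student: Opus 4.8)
The plan is to compare the two scattered fields directly in the common exterior and to convert the coincidence of their far--field patterns into a statement about the support of the induced tangential magnetic current, which by the preceding Proposition must exhaust the whole screen.

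First I would show that the two scattered fields agree away from the screens. Since $u_1$ and $u_2$ are generated by the same incident field, the difference $(e_1-e_2,\,h_1-h_2)$ solves Maxwell's equations in $\mR^3\setminus(\overline{S_1}\cup\overline{S_2})$ and satisfies the Silver--M\"uller radiation condition, while by hypothesis its far--field pattern vanishes. Its scalar components are radiating solutions of the Helmholtz equation, so Rellich's lemma forces the difference to vanish on the unbounded connected component of $\mR^3\setminus(\overline{S_1}\cup\overline{S_2})$; since removing two compact two--dimensional screens from $\mR^3$ leaves a connected set, in fact $u_1=u_2$ on all of $\mR^3\setminus(\overline{S_1}\cup\overline{S_2})$.

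Next I would argue by contradiction, assuming $\pi_1\neq\pi_2$, so that $\pi_1\cap\pi_2$ is either empty or a single line $\ell$. Fix any point $x_0\in S_1$ with $x_0\notin\pi_2$; since $\overline{S_2}\subset\pi_2$, such an $x_0$ lies at positive distance from $\overline{S_2}$, so $u_2$ solves Maxwell's equations in a full ball $B(x_0,\delta)$ and is therefore real--analytic there, in particular across $\pi_1$ near $x_0$. On $B(x_0,\delta)\setminus S_1$ we have $u_1=u_2$, so both one--sided tangential traces of $h_1$ at $x_0$ coincide with $\nu\times h_2$, whence the jump $\nu\times[H_{{\rm sc},1}]$ vanishes in a neighbourhood of $x_0$ on $S_1$. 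Letting $x_0$ range over $S_1\setminus\pi_2$ gives $\supp\big(\nu\times[H_{{\rm sc},1}]\big)\subseteq S_1\cap\pi_2\subseteq\pi_1\cap\pi_2$.

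Finally I would derive the contradiction. By the preceding Proposition the density $\rho_1=\nu\times[H_{{\rm sc},1}]$ solving the surface integral equation \eqref{IE} has full support, $\supp(\rho_1)=S_1$, a two--dimensional $C^2$--manifold; it cannot be contained in the line $\ell$ (nor in the empty set), so $\pi_1\neq\pi_2$ is untenable and $\pi_1=\pi_2$. I expect the main obstacle to be the rigorous passage from \emph{$u_1=u_2$ off the screens, with $u_2$ analytic across $\pi_1$ near $x_0$} to \emph{the current of $u_1$ vanishes near $x_0$}, i.e. controlling the one--sided $TH^{-1/2}$--traces of the $L^2_{\rm loc,\,curl}$ field $h_1$ under the analytic matching; this requires combining the trace theory developed in Section 1 with the real--analyticity of Maxwell solutions. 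The verification that the complement of the two screens is connected and that Rellich's lemma applies to the radiating difference field are the remaining points needing care.
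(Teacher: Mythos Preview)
Your argument is correct and the first two steps coincide with the paper's proof: equal far fields plus Rellich and connectedness of $\mR^3\setminus(S_1\cup S_2)$ give $u_1=u_2$ off both screens, and then analyticity of $u_2$ across $\pi_1$ away from $\pi_2$ forces $\supp(\nu\times[H_{\rm sc,1}])\subset S_1\cap\pi_2\subset\pi_1\cap\pi_2$.

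The difference is in how the contradiction is closed. You invoke the preceding full--support proposition to conclude $\supp(\rho_1)=S_1$, which cannot sit inside a line. The paper instead argues purely on the Sobolev side: if $\pi_1\neq\pi_2$ the density $\rho_1\in\dot{TH}^{-1/2}$ is carried by a codimension--$2$ set (a line segment, or empty in the parallel case), but any nonzero compactly supported distribution supported on a codimension--$2$ submanifold of $\mR^3$ lies in $H^s(\mR^3)$ only for $s<-1$; hence $\rho_1=0$, the far field vanishes, and this contradicts the non--vanishing hypothesis directly.

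The paper's route is self--contained and matches the proposition exactly as stated, for an arbitrary incident field with non--vanishing scattered far field. Your route imports the hypotheses of the full--support proposition (plane--wave incidence with the polarisation condition), which are not part of the present statement; this is harmless for the main theorem but makes your proof of \emph{this} proposition slightly less general. The trace--matching concern you flag is handled in the paper at the same level of rigour you propose, so there is no additional obstacle there.
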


\begin{proof} Let \(\rho_1\) and \(\rho_2\) be the jumps of \(\nu_1 \times h_1\) and \(\nu_2 \times h_2\) across \(S_1\) and \(S_2\) respectively. Here \(\nu_i\) is the specified unit normal to \(S_1\). Since \(u_1\) and \(u_2\) have equal far fields and \(\mR^3 \setminus (S_1\cup S_2)\) is connected, we must have \(u_1 = u_2\) there. Hence both fields must be smooth across \((S_1 \cup S_2 ) \setminus (S_1 \cap S_2)\) i.e both densities \(\rho_1\) and \(\rho_2\) are supported in the intersection \(S_1\cap S_2\). If the planes \(\pi_1\) and \(\pi_2\) intersect transversally, the jumps are supported on a codimension \(2\) subspace, and since they belong \(\dot{\rm TH}^{-1/2}(S_1 \cup S_2)\) they must vanish\footnote{Note that a nonvanishing compactly supported distribution density on a codimension \(2\) submanifold of \(\mR^3\)  belongs to \(H^s\) if and only if \(s<-1\). This follows for example from estimates at the end of Section 7.1 in \cite{HoI} by applying these to a suitable dyadic decomposition.} if the intersection is transversal so also the far fields vanish. 
\end{proof}

\end{document}